\numberwithin{equation}{section}
\renewcommand{\to}{\longrightarrow}
\newcommand{\R}{{\mathbb{R}}}
\newcommand{\so}{\mathfrak{so}}
\newcommand{\n}{\mathfrak{n}}
\newcommand{\h}{\mathfrak{h}}
\newcommand{\m}{\mathfrak{m}}
\newcommand{\g}{\mathfrak{g}}
\newcommand{\RH}{\mathbb{R}\mathrm{H}}
\theoremstyle{definition}
\newtheorem*{AS-theorem}{Ambrose-Singer Theorem}
\newtheorem{definition}{}[section]
\newtheorem{proposition}[definition]{}
\newtheorem{theorem}[definition]{}
\newtheorem{corollary}[definition]{}
\newtheorem{lemma}[definition]{}
\newtheorem{remark}[definition]{}
\newtheorem{problem}[definition]{}
\renewcommand{\eqref}[1]{\hyperref[#1]{(\protect\NoHyper\ref{#1}\protect\endNoHyper)}}
\title{Canonical reductive decomposition of extrinsic homogeneous submanifolds}
\author{José Luis  {\small CARMONA JIMÉNEZ}}
\address{JLCJ: Institute of Mathematics “Simion Stoilow” of the Romanian Academy, 21 Calea Grivitei, 010702 Bucharest, Romania.}
\email{jcarmona@imar.ro}
\author{Marco {\small Castrillón López*}}
\address{MCL: Dept. Álgebra, Geometría y Topología, Facultad de Ciencias Matemáticas, Universidad Complutense de Madrid, 28040 Madrid, Spain.}
\email{mcastri@mat.ucm.es}
\date{\today}
\begin{document}
\thanks{
\noindent * = Corresponding author.
\newline \noindent Both authors have been partially supported by projects PID2021-126124NB-I00  and PID2024-156578NB-I00, Agencia Estatal de Investigación, Spain. JLCJ has been supported by the PNRR-III-C9-2023-I8 grant CF 149/31.07.2023 {\em Conformal Aspects of Geometry and Dynamics}.}

\begin{abstract}
    Let $\overline{M}=\overline{G}/\overline{H}$ be a homogeneous Riemannian manifold. Given a Lie subgroup $G\subset \overline{G}$ and a reductive decomposition of the homogeneous structure of $\overline{M}$, we analyze a canonical reductive decomposition for the orbits of the action of $G$. These leaves of the $G$-action are extrinsic homogeneous submanifolds and the analysis of the reductive decomposition of them is related with their extrinsic properties. We connect the study with works in the literature and initiate the relationship with the Ambrose-Singer theorem and homogeneous structures of submanifolds.
\end{abstract}

\maketitle

{\footnotesize
\textbf{Key words.} Ambrose-Singer theorem, extrinsic homogeneity, reductive decomposition.

\textbf{MSC:} 53C05, 53C12, 53C40.
}
\section{Introduction}

Two classical topics in Differential Geometry are those of homogeneous manifolds and the geometry of submanifolds. The combination of both has been one main research interest in classical and recent articles. More precisely, given a homogeneous ambient space where a group $\overline{G}$ is acting transitively, the objective is to analyze the geometry of the leaves induced by the action of a subgroup $G\subset\overline{G}$. Furthermore, characterization of those submanifolds of $\overline{M}$ that are leaves of any action is an essential question.

The geometry of homogeneous spaces when they are reductive is particularly rich because they are equipped with a canonical connection that reflects deeply the symmetries of these spaces. These canonical connections, defined through reductive decompositions, provide fundamental tools to analyze curvature properties and symmetry groups of homogeneous manifolds. Furthermore, the canonical connection is the main character of the Ambrose-Singer theorem: In the Riemannian case, a manifold $(\overline{M},\overline{g})$ is locally homogeneous if and only if there is a connection $\tilde{\nabla}$ such that $\tilde{\nabla}\tilde{R}=0$, $\tilde{\nabla} \tilde{T}=0$ and $\tilde{\nabla}\overline{g}=0$, where $\tilde{R}$ and $\tilde{T}$ are the curvature and torsion of $\tilde{\nabla}$. We can upgrade local homogeneity to global homogeneity if some topological conditions are added (connectedness, simply connectedness, and completeness). A version for manifolds equipped with geometry defined by tensors (non-necessarily Riemannian) can be found in \cite{CC2022*}).

In this paper, our objects to study are reductive extrinsically homogeneous submanifolds of homogeneous manifold $\overline{M} = \overline{G}/\overline{H}$, that is, closed submanifolds $M \subset \overline{M}$ that are orbits $M = G \cdot o$ ($o\in M$) of a closed subgroup $G \subset \overline{G}$. These spaces are homogeneous spaces $M=G/H$, with $H = G \cap \overline{H}$, and admit reductive decomposition of the corresponding Lie algebra $\mathfrak{g} = \mathfrak{h} + \mathfrak{m}$. A classical reference on extrinsically homogeneous submanifolds is \cite{E1998} and since its analysis is performed in non-necessarily Riemannian ambient spaces, reductivity (which is taken as an assumption) is not guaranteed neither for $\overline{M}$ nor $M$. In particular, \cite[Thm.~2]{E1998} aligns each reductive decomposition of the homogeneous submanifold with a $G$-connection on $E = T \overline{M} \vert _{M}$ which can be thought of as canonical since it comes from the choice of a reductive decomposition on $M$. When we focus the attention to the Riemannian setting in the literature, the analysis have been mainly developed in specific geometric contexts (for example, space forms, or symmetric spaces, see \cite{quast}, \cite{tillman}). In this article, we investigate the relationship between reductive decompositions of and arbitrary Riemannian homogeneous manifold $\overline{M}$ and reductive decompositions in the leaves $M$ and, in particular, we propose a canonical one in these submanifolds starting from every initial decomposition of the ambient manifold induced by the metric structure.

The article is organized as follows: In Section~\ref{sec:2}, we recall basic definitions and known results concerning reductive extrinsically homogeneous submanifolds. In Section~\ref{sec:Riemannian-Case}, we analyze the particular setting of homogeneous Riemannian manifolds, obtaining a natural reductive decomposition of an extrinsically homogeneous submanifold from the given decomposition of the ambient space. Furthermore, we investigate the geometric properties induced by such decompositions. In Section~\ref{sec:hom-structures}, we define the concept of Riemannian homogeneous structures on submanifolds of homogeneous spaces, this definition generalizes the known definition for homogeneous structures of spaces forms, see~\cite[Rmk.~6.1.5]{BCO2016}. We prove that reductive extrinsically homogeneous submanifolds naturally admit such structures. Moreover, we propose and discuss an open problem regarding minimal conditions ensuring homogeneity of a submanifold endowed with a homogeneous structure. Finally, Section~\ref{sec-examples} provides explicit examples illustrating the general theory developed in previous sections, including the canonical connections for submanifolds such as horospheres in hyperbolic spaces and concentric spheres in Euclidean spaces.

\section{Extrinsically Homogeneous Submanifolds}\label{sec:2}
Let $\overline{M} = \overline{G}/\overline{H}$ be a homogeneous manifold and let $\overline{\g}$ and $\overline{\h}$ be the Lie algebras of $\overline{G}$ and $\overline{H}$, respectively. A homogeneous manifold $\overline{M}$ is said to be \emph{reductive} if there exists an $\mathrm{Ad}(\overline{H})$-invariant subspace $\overline{\mathfrak{m}}$ such that $\overline{\mathfrak{g}} = \overline{\mathfrak{h}} + \overline{\mathfrak{m}}$.
If we denote $o=[e]_{\overline{H}}$, then the linear map
\begin{align*}
    \varphi \colon \overline{\mathfrak{g}}\;&\longrightarrow\; T_{o}\overline{M},\\
X \;&\longmapsto\; \left.\frac{d}{dt}\right|_{t=0}\exp(tX)\cdot o
\end{align*}
has kernel \(\overline{\mathfrak{h}}\), so that if \(X_1,X_2 \in \overline{\mathfrak{g}}\)
satisfy
\[
\left.\frac{d}{dt}\right|_{t=0}\exp(tX_1)\cdot o \;=\;
\left.\frac{d}{dt}\right|_{t=0}\exp(tX_2)\cdot o,
\]
then \(X_1 - X_2 \in \overline{\mathfrak{h}}\). In particular, the restriction
\(\varphi_{\overline{\mathfrak{m}}}\) of \(\varphi\) to \(\overline{\mathfrak{m}}\),
\begin{equation}\label{eq:Identificación}
    \varphi_{\overline{\mathfrak{m}}} \colon \overline{\mathfrak{m}} \;\longrightarrow\; T_{p_0}\overline{M},
\end{equation}
is a bijection. A reductive decomposition determines a canonical linear connection $\tilde{\nabla}$ which is characterized by the condition 
\begin{equation}
    \label{carcon}
  (\tilde{\nabla}_{X^*} B^*)_o \;=\; -\,[X,\,B]^*_o,
\end{equation}
for $B \in \overline{\mathfrak{g}}$, $X \in \overline{\mathfrak{m}}$, where the star stands for the fundamental vector fields, i.e., $X^*_p= \left.\frac{d}{dt}\right|_{t=0} \mathrm{exp}(tX)\cdot p$, $X\in\overline{\mathfrak{g}}$. 

We consider a Lie subgroup $G\subset\overline{G}$ such that the orbit $M=G\cdot o$ is a closed submanifold. The homogeneous space $M=G/H$, with $H=\overline{H}\cap G$, is said to be a \emph{reductive orbit} or a \emph{reductive extrinsically homogeneous submanifold} (known in~\cite{E1998} simply as \emph{reductive homogeneous}) in $\overline{M}$ if there exists an $\mathrm{Ad} (H)$-invariant subspace $\m$ such that $\g = \h + \m$. Furthermore, a linear connection 
\(D\) on \(E = T\overline{M}\vert_M\) is called a \emph{\(\overline{G}\)-connection} if, for every 
piecewise smooth curve \(c: [a,b] \to M\), the \(D\)-parallel transport 
\(\tau_c : T_{c(a)}\overline{M} \to T_{c(b)} \overline{M}\) is determined by some element of \(\overline{G}\). In other 
words, there exists some \(g \in \overline{G}\) such that, for all 
\(v \in T_{c(a)} \overline{M}\), it holds that
\[
  \tau_c(v) = (L_g)_* v.
\]
We recall the following result of~\cite[Thm.~2]{E1998}.

\begin{theorem}\label{thm:Esc}
Let $\overline{M}$ be a reductive homogeneous manifold with canonical connection $\tilde{\nabla}$. A closed submanifold $M \subset \overline{M}$ is reductive extrinsic homogeneous if and only if 
there exists a linear $\overline{G}$-connection $D$ on $E := T\overline{M} \vert_{M}$ such that 
$TM \subset E$ is a parallel subbundle and the tensor 
\(
\Gamma = \tilde{\nabla}- D \colon TM \longrightarrow \mathrm{End}(E)
\)
is parallel with respect to $D$.
\end{theorem}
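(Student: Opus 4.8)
Since the statement is an equivalence, the plan is to establish the two implications separately. The forward one (a reductive orbit produces such a $D$) is a direct construction, whereas the converse is the substantive part and follows the reconstruction argument of \cite[Thm.~2]{E1998}.

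\emph{From a reductive orbit to the connection $D$.} Assume $M = G\cdot o = G/H$ with $H = G\cap\overline{H}$ and an $\mathrm{Ad}(H)$-invariant complement $\g = \h + \m$. I would first note that $E = T\overline{M}\vert_M$ is a $G$-homogeneous vector bundle over $M$: the $G$-action on $\overline{M}$ preserves $M$, hence lifts to bundle automorphisms of $E$ covering the action on $M$, and $E\cong G\times_H T_o\overline{M}$ with $H$ acting via the isotropy representation. The complement $\m$ then singles out the canonical invariant connection $D$ on $E$, induced by the $\h$-component of the Maurer--Cartan form of $G$. The three required properties all reduce to $G$-invariance. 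For (i), $D$ is a $\overline{G}$-connection because the $D$-parallel transport along a curve $c$ in $M$ starting at $o$ equals $(L_g)_*$, where $g\in G\subset\overline{G}$ is the endpoint of the horizontal lift of $c$ to $G$; along $c(t)=\exp(tX)\cdot o$ with $X\in\m$ this is $(L_{\exp(tX)})_*$. For (ii), $T_oM$ is an $H$-invariant subspace of $T_o\overline{M}$, so $TM = G\times_H T_oM$ is a $G$-invariant subbundle of $E$, hence $D$-parallel. For (iii), $\tilde{\nabla}$ is $\overline{G}$-invariant and $D$ is $G$-invariant, so $\Gamma = \tilde{\nabla}-D$ is a $G$-invariant section of $T^*M\otimes\mathrm{End}(E)$, and $G$-invariant tensor fields on a reductive homogeneous bundle are parallel for the canonical connection.

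\emph{From the connection $D$ back to the orbit.} Conversely, assume $D$ is a $\overline{G}$-connection on $E$ with $TM$ and $\Gamma = \tilde{\nabla}-D$ both $D$-parallel. Passing first to the quotient of $\overline{G}$ by the ineffective kernel of its action (elements of $\overline{G}$ are affine for $\tilde{\nabla}$, so this kernel is trivial once the action is effective), every piecewise smooth curve $c$ in $M$ from $o$ to $p$ determines a unique $g_c\in\overline{G}$ with $(L_{g_c})_* = \tau^D_c$; hence $g_c\cdot o = p$ and $g_{c\ast c'} = g_{c'}g_c$. Since elements of $\overline{G}$ preserve $\tilde{\nabla}$ together with its torsion $\tilde{T}$ and curvature $\tilde{R}$, and since $TM$ and $\Gamma$ are $D$-parallel, each $\tau^D_c$ intertwines the data $(T_oM,\Gamma_o,\tilde{R}_o,\tilde{T}_o)$ with $(T_pM,\Gamma_p,\tilde{R}_p,\tilde{T}_p)$. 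I would use this to show that $g_c$ carries $M$ onto $M$: a submanifold through $o$ is locally determined by its full osculating data --- second fundamental form, its $\tilde{\nabla}$-derivatives, and the relevant ambient curvature terms --- all of which $g_c$ preserves, so $g_c(M)$ is the leaf through $p$ of the very same structure, namely $M$. Granting this, the set
\[
G:=\bigl\{\,g\in\overline{G}\ :\ g\cdot o\in M\ \text{ and }\ (L_g)_*\vert_{T_o\overline{M}}=\tau^D_c\ \text{for some curve }c\text{ in }M\,\bigr\}
\]
is a subgroup: if $g_1,g_2\in G$ then $g_1$ preserves $M$, so $D$ and its parallel transport along curves in $M$ are $g_1$-related, and $(L_{g_1g_2})_*$ is again the $D$-parallel transport along a concatenated curve in $M$. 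By construction $G\cdot o = M$. Finally one checks that $G$ is a closed Lie subgroup of $\overline{G}$ --- its isotropy $H = G\cap\overline{H}$ realizes the $D$-holonomy at $o$, a Lie subgroup of $\overline{H}$, and $G$ is closed because $M$ is closed in $\overline{M}$ --- and that the complement of $\h$ in $\g$ determined by $D$ (under the identification $\m\cong T_oM$, $X\mapsto X^*_o$) is $\mathrm{Ad}(H)$-invariant with canonical connection equal to $D$, by the standard dictionary between reductive decompositions and canonical connections.

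The main obstacle is exactly the step in the converse where one upgrades ``$g_c$ preserves the tangent space and the osculating data at $o$'' to ``$g_c$ preserves $M$''. The $\overline{G}$-connection hypothesis records only the first-order information $(L_g)_*$, so one must genuinely exploit the $D$-parallelism of $\Gamma$ --- which packages the higher-order extrinsic geometry of $M$ inside $\overline{M}$ --- to control how $g_c$ acts on a whole neighbourhood of $o$ in $M$. The remaining points (that the $D$-holonomy is a closed Lie subgroup, that $G$ is closed, and that the reconstructed decomposition is reductive with canonical connection $D$) are then routine, and are carried out in \cite[Thm.~2]{E1998}.
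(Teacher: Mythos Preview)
The paper does not prove this theorem at all: it is introduced with ``We recall the following result of~\cite[Thm.~2]{E1998}'' and stated without any argument. There is therefore nothing in the paper to compare your proof against. Your sketch is essentially an outline of Eschenburg's original proof, and the paper simply cites that source.

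On the content of your sketch: the forward direction is fine and in fact matches what the paper \emph{does} develop around \ref{prop:1} --- the canonical $G$-connection $D$ on $E=G\times_H T_o\overline{M}$ induced by the reductive complement $\m$, with $D$-parallel transport given by left translation and hence a $\overline{G}$-connection. For the converse you correctly identify the crux: upgrading ``$(L_{g_c})_*$ matches the $D$-parallel transport and hence preserves $(T_oM,\Gamma_o,\tilde{R}_o,\tilde{T}_o)$'' to ``$g_c$ preserves $M$'' is the substantive step, and your appeal to osculating data is only heuristic as written. Since you explicitly flag this and defer to \cite{E1998} for the details, the sketch is honest about its scope; just be aware that this is precisely the point where a self-contained proof would need real work, and the paper under review makes no attempt to supply it.
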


The connection $D$ above is not unique and comes from a choice of a reductive decomposition $\mathfrak{g}=\h + \m$. In fact, $D$ can be thought of as canonical connection with respect to that decomposition. Indeed, the left-invariant distribution of subspaces defined by $\m$ is a principal connection for the principal bundle $G\to M=G/H$. The canonical connection on $M$ is the induced linear connection by this principal connection when $TM$ is regarded as the associated vector bundle with respect to the linear action of $H$ on the vector space $T_oM$. When we let $H\subset \overline{H}$ act on the full tangent vector space $T_o\overline{M}$, the associated vector bundle is $T\overline{M} \vert_{M}$ and $D$ is the induced linear connection. In particular, we can give an expression of this connection as follows.

\begin{proposition}\label{prop:1}
    Let $M=G/H \subset \overline{M}$ be a reductive extrinsically homogeneous submanifold with a reductive decomposition $\g = \h + \m$. Then, the canonical $G$-connection $D$ at $o = [e]_H$ is given by the condition
    \begin{equation}
    \label{carcon2}
        (D _{X ^ *} B ^*) _o = - [X, B] ^* _o,
    \end{equation}
    for all $X \in \m$ and $B \in \overline{\g}$.
\end{proposition}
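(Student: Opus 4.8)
The plan is to identify $D$ explicitly as the linear connection on the associated bundle $E = T\overline{M}\vert_M = G\times_H T_o\overline{M}$ induced by the principal connection on $G\to M$ whose horizontal distribution is the left-translate of $\mathfrak{m}\subset\mathfrak{g}$, and then to compute its covariant derivative on fundamental vector fields at the point $o$. Concretely, I would first recall the standard description: sections of $E$ are in bijection with $H$-equivariant functions $s\colon G\to T_o\overline{M}$, and for such a section the induced covariant derivative along a curve $g(t)$ in $G$ projecting to $c(t)$ in $M$ is $(D_{\dot c}s)(c(t)) = [g(t), \tfrac{d}{dt}s(g(t)) + \lambda(\omega(\dot g(t)))\, s(g(t))]$, where $\omega$ is the $\mathfrak{h}$-valued connection $1$-form dual to $\mathfrak{m}$ and $\lambda$ is the isotropy representation of $H$ on $T_o\overline{M}$ (here $\lambda(Y) = (\mathrm{ad}\,Y)$ acting through $\overline{\mathfrak{g}}/\overline{\mathfrak{h}}\cong T_o\overline{M}$, i.e.\ via the bracket in $\overline{\mathfrak{g}}$ followed by $\varphi_{\overline{\mathfrak{m}}}$). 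This is the exact analogue of the characterization \eqref{carcon} of $\tilde\nabla$ on $\overline{M}=\overline{G}/\overline{H}$, just with the smaller group $G$ acting on the larger fibre $T_o\overline{M}$.

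Next I would carry out the computation at $o$. Fix $X\in\mathfrak{m}$ and $B\in\overline{\mathfrak{g}}$. The integral curve of $X^*$ through $o$ is $c(t) = \exp(tX)\cdot o$, which lifts to the horizontal curve $g(t) = \exp(tX)$ in $G$ (horizontal precisely because $X\in\mathfrak{m}$, so $\omega(\dot g(t))$ involves only the $\mathfrak{h}$-part of $\mathrm{Ad}(\exp(-tX))X$, which vanishes at $t=0$ and more care is needed for $t\neq 0$ — but one only needs the value at $t=0$). The fundamental vector field $B^*$ corresponds to the equivariant function $g\mapsto \varphi_{\overline{\mathfrak{m}}}^{-1}$ applied to $\mathrm{pr}_{\overline{\mathfrak{m}}}(\mathrm{Ad}(g^{-1})B)$, or more transparently to $g\mapsto (L_{g^{-1}})_*B^*_{g\cdot o}$; differentiating this along $g(t)=\exp(tX)$ at $t=0$ gives a term $-[X,B]$ projected appropriately, while the connection term $\lambda(\omega(\dot g(0)))\,s(o)$ vanishes since $\omega(\dot g(0)) = \omega(X^*) = 0$ as $X\in\mathfrak{m}$. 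Assembling these, $(D_{X^*}B^*)_o = -[X,B]^*_o$, which is \eqref{carcon2}.

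An alternative and perhaps cleaner route, which I would mention, is to derive \eqref{carcon2} directly from Theorem~\ref{thm:Esc} together with the uniqueness of a connection satisfying a first-order condition on fundamental vector fields: since $\mathfrak{g}\subset\overline{\mathfrak{g}}$ and $\mathfrak{m}$ is by construction the restriction of the reductive data to $G$, the canonical $G$-connection $D$ and the canonical connection $\tilde\nabla$ of $\overline{M}$ agree in the sense that $D_{X^*}$ on sections of $E$ is computed by the same bracket formula as $\tilde\nabla_{X^*}$, for $X\in\mathfrak{m}\subset\overline{\mathfrak{m}}$ — one just has to check that $\mathfrak{m}$ can be taken inside $\overline{\mathfrak{m}}$, or argue independently of that choice. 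I would prefer to present the direct computation since it does not require comparing the two reductive decompositions.

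The main obstacle I anticipate is bookkeeping with the identifications: $T_o\overline{M}$ is identified with $\overline{\mathfrak{g}}/\overline{\mathfrak{h}}$ (equivalently with $\overline{\mathfrak{m}}$ via $\varphi_{\overline{\mathfrak{m}}}$), and one must check that the isotropy representation of $H$ used to build $E$ as an associated bundle is compatible with the $\overline{H}$-action restricted to $H$, so that the bracket $[X,B]$ in $\overline{\mathfrak{g}}$ is the object that actually appears — and that the horizontal lift condition is the correct one, i.e.\ that $\mathfrak{m}$ (not merely some complement in $\overline{\mathfrak{g}}$) defines the relevant principal connection on $G\to M$. Once these identifications are pinned down, the differentiation at $t=0$ is a one-line computation identical in form to the proof of \eqref{carcon}.
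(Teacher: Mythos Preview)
Your approach is correct and essentially the paper's: both recognize that $\exp(tX)$ is a horizontal lift of $c(t)=\exp(tX)\cdot o$ for $X\in\mathfrak{m}$, so that $D$-parallel transport along $c$ is $(L_{\exp(tX)})_*$, and then differentiate $(L_{\exp(-tX)})_*B^*_{c(t)}=(\mathrm{Ad}(\exp(-tX))B)^*_o$ at $t=0$ to obtain $-[X,B]^*_o$; the paper simply bypasses your equivariant-function formalism and invokes parallel transport directly. Two asides: your hedge about horizontality for $t\neq 0$ is unnecessary, since $\dot g(t)=(L_{g(t)})_*X$ lies in the left-translate of $\mathfrak{m}$ for all $t$; and your alternative route does fail in general, because $\mathfrak{m}\subset\overline{\mathfrak{m}}$ need not hold (the paper's horosphere example has $\mathfrak{m}\subset\overline{\mathfrak{h}}$).
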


\begin{proof}
    Given $X\in \m$, the curve $\exp(tX)$ is horizontal in the bundle $G\to G/H$ with respect to the principal connection defined by $\m$. Therefore, the parallel transport along the curve \(c(t) = \exp(tX)\cdot o\) in $M$ is $(L_{\exp(tX)})_*$ for both the induced connection in the associated bundle $TM$ (this is a classical result, see~\cite[p.~192, Cor.~2.5]{KN1969}) and the associated bundle $T\overline{M}|_M$. Thus, we obtain:
\begin{align*}
  (D_{X^*} B^*)_o 
  &= \lim_{t \to 0} \frac{(L_{\exp(-tX)})_{*}\,B_{c(t)}^* \;-\; B_o^*}{t}\\
  &= \lim_{t \to 0} \frac{\bigl(\mathrm{Ad}_{\exp(-tX)}(B)\bigr)_o^* \;-\; B_o^*}{t}\\
  &= \left(\,\lim_{t \to 0} \frac{\mathrm{Ad}_{\exp(-tX)}(B) \;-\; B}{t}\right)_o^* = -\,[X,\,B]_o^*.
\end{align*}
Here, we used the fact that 
\((L_{\exp(-tX)})_{*}\,B_{\gamma(t)}^* 
= \bigl(\mathrm{Ad}_{\exp(-tX)}(B)\bigr)^*\),
since the differential of the left translation 
\(L_{\exp(-tX)}\) acts via the adjoint representation 
on the corresponding fundamental vector fields.
\end{proof}

\section{The Riemannian Case} \label{sec:Riemannian-Case}

Homogeneity in the case of Riemannian manifolds $(\overline{M},\overline{g})$ takes for granted that the group $\overline{G}$ acts transitively by isometries. From now on, we also assume that $\overline{G}$ acts efficiently, otherwise we consider $\overline{G}/\Gamma$ instead of $\overline{G}$, with $\Gamma=\{p\in \overline{G}:L_p=\mathrm{id}_{\overline{M}} \}$. We denote by $\overline{\nabla}^{\overline{g}}$ the Levi Civita connection of $(\overline{M},\overline{g})$.

As it is well known (for example, see~\cite[Prop.~1.4.8]{CC2019}), every homogeneous Riemannian manifold is reductive. For convenience in the following, we sketch the proof of this fact. Let $o=[e]_H\in\overline{M}$ (so that $\overline{h}$ is the isotropy subgroup of $o$). For every $X\in \overline{\mathfrak{g}}$, we consider the Kostant operator 
\begin{eqnarray*}
\overline{K}_X\colon T_o \overline{M} &\to &T_o \overline{M}\\
A_o&\mapsto &\overline\nabla^{\overline{g}} _{A_o}X^*=(\overline{\nabla}^{\overline{g}}_{X^*}A-\mathcal{L}_{X^*}A)_o,
\end{eqnarray*}
where $A\in\mathfrak{X}(\overline{M})$ is any extension of $A_o$. Since $\mathcal{L}_{X^*}\overline{g}=\overline{\nabla}^{\overline{g}} \overline{g}=0$, the Kostant operator is skew-symmetric, that is, $K_X\in\mathfrak{so}(T_o\overline{M},\overline{g}_o)\simeq \mathfrak{so}(n)$. We define
    \begin{equation}
        \label{inner}
        \overline{\phi} (X,Y) = - \overline{\mathfrak{B}} (\overline{K}_{X}, \overline{K}_{Y}), \quad X, Y \in \overline{\g},
    \end{equation}
where $\overline{\mathfrak{B}}$ is the Cartan-Killing metric in $\mathfrak{so}(n)$. We have that $\overline{\phi}$ is semidefinite positive. Furthermore, the restriction of $\overline{\phi}$ to $\overline{\h}$ is definite. Indeed, if $X\in \overline{\h}$ satisfies that $\overline{\phi} (X,X)=0$, then we have that $\overline{K}_{X}=(\overline{\nabla}^{\overline{g}} X^*)_o=0$, and since $X^*_o=0$, the Killing vector field vanishes, i.e., $X=0$. We now choose \[\overline{\m}=\overline{\h}^\perp=\{ X\in\overline\g :\: \overline{\phi} (X,Y)=0,\, \forall Y\in\overline{\h}\}.\]
We get a direct sum $\overline{\g}=\overline{\h}+\overline{\m}$ because $\overline{\m}\cap \overline{\h}=0$ (since $\overline{\phi}|_{\overline{\h}}$ is definite), and for each $X\in\overline{\g}$, we have that $X-\sum \overline{\phi}(X,U_i)U_i \in\overline{\m}$, where $\{U_i\}$ is an orthonormal basis of $\overline{\h}$. On the other hand, the adjoint invariance of $\mathfrak{B}$ easily yields $\mathrm{Ad}_{\overline{H}}\overline{\m}=\overline{\m}$, that is, we have a reductive decomposition. Obviously, not every reductive decomposition is defined with this procedure. Moreover, although this decomposition might seem canonical, it depends on the choice of the point $o\in \overline{M}$. If we pick other point $o'$, then the new isotropy and its Lie algebra $\overline{\h}'$ transforms under conjugation and the complement $\overline{\m}'$ is then different.

Let $G$ now be a Lie subgroup of $\overline{G}$ such that the orbit $M=G\cdot o$ is closed. The manifold $M$ is reductive extrinsically homogeneous as $G$ acts by isometries with respect to the metric $g=\overline{g}|_{TM}$. 
We obtain in the following a reductive decomposition of $\g$ induced in a natural way from any given initial reductive decomposition of $\overline{\g}=\overline{\h}+\overline{\m}$ of the ambient space.

\begin{theorem} \label{thm:main}
    Let $(\overline{M},\overline{g})$ be a homogeneous Riemannian manifold $\overline{M}=\overline{G}/\overline{H}$. Let $o=[e]_H\in\overline{M}$ and $M=G\cdot o$ a closed orbit defined by a Lie subgroup $G\subset \overline{G}$. We assume that $\overline{M}$ is equipped with a reductive decomposition $\overline{\g}=\overline{\h}+\overline{\m}$. Then 
    \[
    \g=\h + \m,\qquad \text{with}\qquad \m = \h ^\perp \cap \g,
    \]
is a reductive decomposition of $M=G/H$, $H=\overline{H}\cap G$, where $\h ^\perp =\{X\in \g:\: \overline{\phi}(X,Y)=0,\,\forall Y\in \h\}$.
\end{theorem}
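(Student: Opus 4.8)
The goal is to verify that $\m = \h^\perp \cap \g$ (orthogonal complement taken with respect to the bilinear form $\overline{\phi}$ on $\overline{\g}$ defined in \eqref{inner}) gives a reductive decomposition $\g = \h + \m$. Two things must be checked: (i) $\g = \h \oplus \m$ as vector spaces, and (ii) $\Ad(H)\m \subset \m$.

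For the direct sum in (i), the plan is to reuse the argument already sketched for the ambient space. The key point is that the restriction $\overline{\phi}|_{\overline{\h}}$ is positive definite, as shown in the preamble; hence $\overline{\phi}|_{\h}$ is also positive definite, since $\h \subset \overline{\h}$. Therefore $\h \cap \h^\perp = 0$, which gives $\h \cap \m = 0$. For the spanning property, one uses an orthonormal basis $\{U_i\}$ of $\h$ with respect to $\overline{\phi}|_{\h}$: for any $X \in \g$, the element $X - \sum_i \overline{\phi}(X,U_i)U_i$ lies in $\g$ (because $\h \subset \g$) and is $\overline{\phi}$-orthogonal to all of $\h$, hence lies in $\m$. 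This shows $\g = \h + \m$, and combined with $\h \cap \m = 0$ we get the direct sum. Note this argument does not require $\overline{\phi}$ to be definite on all of $\g$, only on $\h$, which is exactly what we have.

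For the $\Ad(H)$-invariance in (ii), the main obstacle — and the place where some care is needed — is that $\h^\perp$ here is taken inside $\g$, so one must show $\Ad(H)$ preserves both $\g$ and $\h^\perp \cap \g$. Invariance of $\g$ is immediate since $H \subset G$. For $\h^\perp$, the plan is to show that $\overline{\phi}$ is $\Ad(\overline{H})$-invariant, hence in particular $\Ad(H)$-invariant: for $h \in \overline{H}$ one has $\overline{K}_{\Ad_h X} = \Ad_h \circ \overline{K}_X \circ \Ad_h^{-1}$ as operators on $T_o\overline{M}$ (this is the standard equivariance of the Kostant operator under the isotropy action, using that $L_h$ fixes $o$ and is an isometry commuting appropriately with the Levi-Civita connection), and then $\Ad(\overline{H})$-invariance of $\overline{\phi}$ follows from the $\Ad$-invariance of the Cartan--Killing form $\overline{\mathfrak{B}}$ on $\so(n)$. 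Given this, for $X \in \m$, $h \in H$, and any $Y \in \h$, we compute $\overline{\phi}(\Ad_h X, Y) = \overline{\phi}(X, \Ad_{h^{-1}} Y) = 0$, because $\Ad_{h^{-1}} Y \in \h$ (as $\h$ is the Lie algebra of $H$ and hence $\Ad(H)$-invariant). Thus $\Ad_h X \in \h^\perp$, and since also $\Ad_h X \in \g$, we conclude $\Ad_h X \in \m$.

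I expect the genuinely delicate step to be the equivariance identity $\overline{K}_{\Ad_h X} = \Ad_h \circ \overline{K}_X \circ \Ad_h^{-1}$: one must unwind the definition of $\overline{K}_X$ as $A_o \mapsto (\overline{\nabla}^{\overline{g}}_{A}X^*)_o$, use that the fundamental vector field satisfies $(L_h)_* X^* = (\Ad_h X)^*$, and that $(L_h)_*$ is a linear isometry of $T_o\overline{M}$ (since $h \in \overline{H}$ fixes $o$) that intertwines the Levi-Civita connection. Everything else is linear algebra over $\g$ and bookkeeping with the bilinear form. Once the equivariance of $\overline{\phi}$ is in hand, the rest of the proof is essentially the same direct-sum-plus-invariance verification as for the ambient reductive decomposition, now restricted to the subalgebra $\g$.
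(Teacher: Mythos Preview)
Your argument is correct, but it takes a somewhat different route from the paper. You work entirely inside $\g$: you use only that $\overline{\phi}|_{\h}$ is positive definite (inherited from $\overline{\phi}|_{\overline{\h}}$) to get $\h\cap\m=0$ and the projection formula $X\mapsto X-\sum_i\overline{\phi}(X,U_i)U_i$ for spanning, and you derive the $\Ad(H)$-invariance of $\m$ directly from the $\Ad(\overline{H})$-invariance of $\overline{\phi}$ via the conjugation identity $\overline{K}_{\Ad_h X}=(L_h)_*\,\overline{K}_X\,(L_h)_*^{-1}$. In particular, your proof never uses the ambient complement $\overline{\m}$ at all, which is a small bonus: it makes transparent that $\m$ depends only on $\overline{\phi}$ (hence on $\overline{g}$ and the point $o$), not on the chosen $\overline{\m}$.

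The paper, by contrast, exploits the ambient decomposition $\overline{\g}=\overline{\h}+\overline{\m}$ to obtain the structural identity $\h^{\perp}=\h^{\perp_{\overline{\h}}}+\overline{\m}$ (orthogonal taken in $\overline{\g}$), and then deduces both the $\Ad(H)$-invariance of $\m$ (from invariance of the two summands) and the direct sum $\g=\h+\m$ by splitting the $\overline{\h}$-part of any $X\in\g$ along $\h\oplus\h^{\perp_{\overline{\h}}}$. What this buys is the explicit description of $\m$ in terms of $\h^{\perp_{\overline{\h}}}$ and $\overline{\m}$, which the paper uses immediately afterwards in \ref{rmk:3.2} (to build the normal piece $\n$) and again at the end of Section~\ref{sec:hom-structures} (to identify $\Gamma$ with the adjoint action of $\h^{\perp_{\overline{\h}}}$). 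Your approach is cleaner as a proof of the bare statement; the paper's approach is chosen because the intermediate decomposition is itself a result they need downstream.
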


\begin{proof}

If $X=X_{\overline{\m}} + X_{\overline{\h}}$ is the decomposition of an element $X \in \h^\perp$, we have that $X_{\overline{\h}}\in \h^\perp$, that is, $\h^{\perp} = \h ^{\perp _{\overline{\h}}} + \overline{\m}$, where $\h^{\perp _{\overline{\h}}}$ is the orthogonal to $\h$ in $\overline{\h}$ (recall that $\phi|_{\overline{\h}}$ is definite positive). Since $\h$ is $\mathrm{Ad}(H)$-invariant, also is $\h^{\perp _{\overline{\h}}}$, and as $\overline{\m}$ is invariant, we have the $\mathrm{Ad}(H)$-invariance of $\m$.

Obviously, $\h^{\perp} \cap \h = \{0\}$ since $\overline{\phi}|_{\overline{\h}}$ is definite positive. Finally, for any $X\in \g$, $X=X_{\overline{\m}} + X_{\overline{\h}}$, we decompose $X_{\overline{\h}}= X_{\h} + X_{\h^{\perp _{\overline{\h}}}}$ so that $X\in \h  + \m$ since $ X_{\h^{\perp _{\overline{\h}}}} + X_{\overline{\m}}\in (\h ^{\perp _{\overline{\h}}} + \overline{\m})\cap \g =  \h ^\perp \cap \g$. 
\end{proof}

\begin{remark}\label{rmk:3.2}
We can transfer the inner product from $(T_o \overline{M} , g_o)$ to $\overline{\m}$ through the bijection \eqref{eq:Identificación}. As the adjoint action of $\overline{H}$ on $\overline{\m}$ is transferred to the orthogonal action in $T_o \overline{M}$, the pull-back metric $\varphi_{\overline{\m}}^*g_o$ is $\mathrm{Ad}(\overline{H})$-invariant. We then define the bilinear form $\psi$ on $\overline{\g}$ as
\begin{equation}
    \label{Inn}
    \psi = \overline{\phi} |_{\overline{\h} \times \overline{\h}} + (\varphi_{\overline{\m}}^*g_o)| _{\overline{\m} \times \overline{\m}},
\end{equation}
which, by construction, is positive definite and $\mathrm{Ad}(\overline{H})$-invariant. Given the decompositions $\overline{\g}=\overline{\h}+\overline{\m}$ and $\g=\h+\m$ in the previous theorem, we can define the subspace $\n\subset \overline{\g}$ as the  orthogonal complement of $\overline{\h} + \m$ with respect to \eqref{Inn}. Thus, we have that $\overline{\g}=\overline{\h}+\m+\n$ and it is easy to verify that $\n$ is $\mathrm{Ad}(H)$-invariant. Geometrically, $(T_oM)^\perp=\{X^*_o:X\in\n\}$ and this invariance reflects the fact that $G$ preserves $TM^\perp$. However, in general, the space $(\m+\n)$ is not $\mathrm{Ad}(\overline{H})$-invariant and the decomposition $\overline{\g}=\overline{\h}+(\m+\n)$ cannot be regarded as a reductive decomposition.
\end{remark}

We now analyze the decomposition $\g = \h + \m$ of~\ref{thm:main} when the reductive decomposition $\overline{\g}=\overline{\h}+\overline{\m}$ is the one at the beginning of the section, that is, the decomposition (see~\cite[Prop.~1.4.8]{CC2019}) with $\overline{\m}=\overline{\h} ^{\perp}$ where the perpendicularity is defined by~\eqref{inner}. 
To avoid confusions, we write by $\overline{\m}=\overline{\h}^{\perp _{\overline{\phi}}}$. 
First, for $X\in \overline{\h}$ and $Y\in \overline{\g}$, we observe that the Kostant operator satisfies $\overline{K}_X(Y^*_o)=(\overline{\nabla}^{\overline{g}}_{X^*}Y^*+[Y,X]^*)_o=[Y,X]^*_o$, thus it coincides with the isotropy representation on $T_o\overline{M}$. On the other hand, for $X\in \g$ we have $\overline{K}_X|_{T_oM} = K_X + \mathrm{II}_o(X,\cdot)$, where $\mathrm{II}$ is the second fundamental form of $M$. With respect to an adapted orthonormal basis $\{u_1,\dots ,u_m;\ u_{m+1},\ldots ,u_n\}$ of $T_o\overline{M}$ such that $\mathrm{span}\{u_1,\ldots ,u_m\}=T_oM$, we have the following matrix expression 
\[
\overline{K}_X=\left(\begin{array}{cc} K_X & * \\ \mathrm{II}(X^*, \, ) & *     
\end{array} \right),\qquad X\in \g.
\]
Moreover, if $M=G\cdot o$ is a principal orbit of the action of $G$ on $\overline{M}$, then
\[
\overline{K}_X=\left(\begin{array}{cc} K_X & 0 \\ 0 & 0     
\end{array} \right),\qquad X\in \h=\g\cap \overline{\h},
\]
since the slice representation of $H$ (i.e., the isotropy representation on the normal space $(T_oM)^\perp=\mathrm{span}\{u_{m+1},\ldots,u_n\}$) is trivial, see for example, cf.~\cite[Rmk.~1.2.7]{A2004}. Now we define $\phi (X,Y)=-\mathfrak{B}(K_X,K_Y)$, $\mathfrak{B}$ being the Cartan-Killing metric in $\mathfrak{so}(T_oM)\simeq \mathfrak{so}(m)$. Using the block description of $\overline{K}_X$ obtained above, we have that 
\begin{equation}
    \label{prin}
\phi(X,Y)=\overline{\phi}(X,Y),\qquad \forall X\in \h, \quad \forall Y\in \g.
\end{equation}
Therefore,
\[ 
\h ^{\perp _{\phi}}= \{ Y\in \g :\: \phi(X,Y)=0,\forall X\in \h\},
\]
coincides with $\h ^{\perp _{\overline{\phi}}} \cap \g = \m$.  
In other words, if we start from the decomposition defined by $\overline{\phi}$, then the decomposition determined by $\phi$ is the one stated in~\ref{thm:main}. Note that if $M=G\cdot o$ is not a principal orbit, the isotropy representation on $T_oM^\perp$ is not trivial, we do not have \eqref{prin}, and $\h^{\perp _\phi}$ need not be $\m$.

\section{Homogeneous Structures of Submanifolds}\label{sec:hom-structures}

Let $(\overline{M}= \overline{G}/ \overline{H},\overline{g})$ be a homogeneous Riemannian manifold equipped with a reductive decomposition and let $\tilde{\nabla}$ be the associated canonical connection. We then have
\[
\tilde{\nabla} \overline{S} = 0, \quad \tilde{\nabla} \overline{R} = 0, \quad \tilde{\nabla} \overline{g} =0,
\]
where $\overline{S} = \overline{\nabla} ^g - \tilde{\nabla}$. Let $M \subset \overline{M}$ be a closed reductive extrinsically homogeneous submanifold. According to~\ref{thm:Esc}, there exists a $\overline{G}$-connection $D$ on $E := T\overline{M} \vert_{M}$ such that $TM \subset E$ is a $D$-parallel subbundle of $E$, and $\Gamma = \tilde{\nabla} - D$ is $D$-parallel.

\begin{lemma}\label{lem:lemurcillo}
    The following two conditions are equivalent:
    \begin{enumerate}
        \item $\Gamma = \tilde{\nabla} - D$ is $D$-parallel.
        \item $S = \overline{\nabla} ^g - D$ is $D$-parallel.
    \end{enumerate}
\end{lemma}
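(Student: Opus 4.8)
The key observation is the algebraic identity
\[
S - \Gamma = (\overline{\nabla}^g - D) - (\tilde{\nabla} - D) = \overline{\nabla}^g - \tilde{\nabla} = \overline{S},
\]
where $\overline{S}$ is the homogeneous structure tensor of the ambient space, restricted to $M$. Thus the plan is to show that $D\overline{S}|_M = 0$ as a section over $M$; once this is established, $D S = D\Gamma + D\overline{S}|_M$ shows that $DS = 0$ if and only if $D\Gamma = 0$, which is exactly the claimed equivalence. Here one must be slightly careful about what ``$D\overline{S}$'' means: $S$ and $\Gamma$ are tensors of type $TM \to \mathrm{End}(E)$, so $\overline{S}$ must be reinterpreted in the same way, namely as the restriction of the ambient $(1,2)$-tensor $\overline{S}$ along the inclusion, using that $TM$ is a $D$-parallel subbundle of $E$ and that $\overline{\nabla}^g$ and $\tilde{\nabla}$ (hence $\overline{S}$) preserve $TM$ along $M$ as well. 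I would spell this identification out explicitly at the start.

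The substantive step is therefore: \textbf{the restriction of $\overline{S}$ to $M$ is $D$-parallel}. I would prove this using that $D$ is a $\overline{G}$-connection (Theorem~\ref{thm:Esc}): its parallel transport along any curve $c$ in $M$ is realized by the differential $(L_g)_*$ of some $g \in \overline{G}$. Since $\overline{G}$ acts by isometries preserving the reductive decomposition, each $L_g$ preserves $\overline{g}$, preserves $\overline{\nabla}^g$ (Levi--Civita is natural), and preserves $\tilde{\nabla}$ (the canonical connection is $\overline{G}$-invariant, because $\mathrm{Ad}(\overline{G})$ — or at least the relevant translations — respects the defining relation \eqref{carcon}; more precisely $\tilde\nabla$ is invariant under the transitive $\overline G$-action by construction). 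Hence $L_g$ preserves $\overline{S} = \overline{\nabla}^g - \tilde{\nabla}$. Concretely, for a curve $c$ from $p$ to $q$ in $M$ with associated $g\in\overline G$, the $D$-parallel transport $\tau_c = (L_g)_*$ intertwines the value of $\overline{S}$ at $p$ with its value at $q$: $\tau_c \circ \overline{S}_p = \overline{S}_q \circ \tau_c$ (acting appropriately on each tensor slot, all of which are moved by $(L_g)_*$). Differentiating along $c$ at $t=0$ gives $(D_{\dot c}\overline S)|_M = 0$, and since $c$ and $\dot c(0)$ are arbitrary this yields $D\overline{S}|_M = 0$.

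With $D\overline{S}|_M = 0$ in hand, the equivalence follows immediately: $D S = D\Gamma$, so $S$ is $D$-parallel iff $\Gamma$ is. I would then remark that the argument is symmetric and self-contained — it does not actually use the full strength of Theorem~\ref{thm:Esc}, only the property that $D$ is a $\overline{G}$-connection and that $TM$ is $D$-parallel, so that all restrictions make sense.

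\textbf{Expected main obstacle.} The only real subtlety is bookkeeping about tensor types and restrictions: $\overline{S}$ lives on $\overline{M}$ as a $(1,2)$-tensor, whereas $S, \Gamma$ are viewed as $\mathrm{End}(E)$-valued $1$-forms on $M$, and one needs $TM$ to be invariant under $\overline{S}_p$ for $p\in M$ so that the ``$TM$-input'' slot restricts correctly; this is where $D$-parallelism of $TM \subset E$, together with invariance of $\overline\nabla^g$ and $\tilde\nabla$ under the isometries realizing $D$-parallel transport, is used. Once that identification is set up cleanly, the $D$-parallelism of $\overline S|_M$ is a short consequence of the $\overline{G}$-connection property, and the rest is the trivial algebraic cancellation $S - \Gamma = \overline S$.
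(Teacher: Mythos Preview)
Your proposal is correct and follows essentially the same route as the paper: both reduce the equivalence to the algebraic identity $S-\Gamma=\overline{S}$ and then show $D\overline{S}|_M=0$. The only difference is in how the latter is justified: the paper argues that $\tilde\nabla\overline{S}=0$ together with the identification of $\overline{G}$ as the transvection group of $\tilde\nabla$ forces $\overline{S}$ to be $\overline{G}$- (hence $G$-) invariant, and then invokes \cite[Prop.~1.4.15]{CC2019} to conclude $D\overline{S}=0$; you instead use directly that $D$ is a $\overline{G}$-connection, so $D$-parallel transport is realized by some $(L_g)_*$, which preserves both $\overline\nabla^g$ and $\tilde\nabla$ and hence $\overline{S}$. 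Your version is slightly more self-contained and avoids the external citations, but the substance is the same.
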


\begin{proof}
    Since $\tilde{\nabla}$ is the canonical connection, it follows that $\overline{G}$ coincides with the transvection group of $\tilde{\nabla}$, see~\cite[Thm.~I.25]{K1980}. Therefore, $\overline{S}$ is $\overline{G}$-invariant,  
    since $\tilde{\nabla} \overline{S} =0$. In particular, it is $G$-invariant. According to~\cite[Prop.~1.4.15]{CC2019}, we have $D \overline{S} = 0$. Finally, using this last fact, it follows that
    \[
    D \Gamma =D(\tilde{\nabla} - D) = D(\tilde{\nabla} - \overline{\nabla}^g + \overline{\nabla}^g - D) = D(\tilde{\nabla} - \overline{\nabla}^g) + D(\overline{\nabla}^g - D) = D S
    \]
    which proves the result.
\end{proof}

This result leads to the following definition which, in fact, generalizes the definition given in~\cite[Rmk.~6.1.5]{BCO2016} for homogeneous structures of submanifolds in spaces forms.

\begin{definition}
    Let $M \subset \overline{M}$ be a closed submanifold and let $S \in \Gamma(T^*M \otimes \mathrm{End}(T\overline{M}))$ be a tensor field. We say $S$ is a \emph{homogeneous structure} of $M$ if $D = \overline{\nabla} ^g -S$ is a metric connection and satisfies,
    \begin{itemize}
    \item[(a)] $TM \subset T\overline{M}|_{M}$ is a $D$-parallel subbundle of $T\overline{M}|_{M}$;
    \item[(b)] $S = \overline{\nabla} - D$ is $D$-parallel.
\end{itemize}
\end{definition}

In fact, if we combine~\ref{lem:lemurcillo} and~\ref{thm:Esc}, then we obtain the following corollary.

\begin{corollary}
    If $M \subset \overline{M}$ is an extrinsically homogeneous Riemannian submanifold then $M$ admits a homogeneous structure.
\end{corollary}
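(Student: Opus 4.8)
The plan is to read the conclusion off from~\ref{thm:Esc} and~\ref{lem:lemurcillo}: the corollary is essentially a repackaging of those two results, and the only point needing a separate (short) argument is that the $\overline{G}$-connection furnished by~\ref{thm:Esc} is metric.

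First I would unwind the hypothesis. An extrinsically homogeneous Riemannian submanifold is, by definition, a closed orbit $M=G\cdot o$ of a Lie subgroup $G\subset\overline{G}$; since $\overline{G}$ acts on $(\overline{M},\overline{g})$ by isometries, the induced action of $G$ on $(M,g)$, $g=\overline{g}|_{TM}$, is transitive and isometric, with isotropy $H=\overline{H}\cap G$. As recalled at the start of Section~\ref{sec:Riemannian-Case}, homogeneity relative to a Riemannian metric forces reductivity, so $M$ admits a reductive decomposition $\g=\h+\m$ (for instance the canonical one of~\ref{thm:main}) and is a reductive extrinsically homogeneous submanifold in the sense of Section~\ref{sec:2}.

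Next I would feed this structure into~\ref{thm:Esc}: it provides a linear $\overline{G}$-connection $D$ on $E:=T\overline{M}\vert_M$ such that $TM\subset E$ is a $D$-parallel subbundle and $\Gamma=\tilde{\nabla}-D$ is $D$-parallel. Set $S:=\overline{\nabla}^g-D\in\Gamma(T^*M\otimes\mathrm{End}(T\overline{M}))$. By~\ref{lem:lemurcillo}, $D$-parallelism of $\Gamma$ is equivalent to $D$-parallelism of $S$, which is condition (b) in the definition of a homogeneous structure; condition (a) is exactly the $D$-parallelism of $TM$ already supplied by~\ref{thm:Esc}. The remaining task — and the place where I would be most careful, even though it is brief — is to check that $D$ is a metric connection, i.e.\ that the fibre metric induced by $\overline{g}$ on $E$ is $D$-parallel. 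This follows from the $\overline{G}$-connection property: for any piecewise smooth $c\colon[a,b]\to M$ the $D$-parallel transport $\tau_c\colon T_{c(a)}\overline{M}\to T_{c(b)}\overline{M}$ equals $(L_g)_*$ for some $g\in\overline{G}$, and $(L_g)_*$ is a linear isometry because $\overline{G}$ acts by isometries; hence every $D$-parallel transport preserves $\overline{g}$, that is $D\overline{g}=0$. (Equivalently, $\overline{g}|_M$ is a $G$-invariant fibre metric on $E$ and $D$, being by~\ref{prop:1} the canonical connection of $\g=\h+\m$ acting on $T\overline{M}\vert_M$, parallelizes every $G$-invariant tensor on $E$ by~\cite[Prop.~1.4.15]{CC2019}.) Assembling (a), (b) and metricity shows that $S$ is a homogeneous structure of $M$. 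The only hypothesis to watch is that a reductive decomposition exists at all so that~\ref{thm:Esc} applies, which is guaranteed here precisely because the ambient Riemannian metric makes the relevant homogeneous spaces reductive.
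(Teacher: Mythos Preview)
Your proof is correct and follows exactly the approach the paper indicates: combine~\ref{thm:Esc} with~\ref{lem:lemurcillo}. You are more careful than the paper in two respects --- you explicitly justify reductivity of $M$ so that~\ref{thm:Esc} applies, and you supply the short metricity argument for $D$ (required by the definition of homogeneous structure) that the paper leaves implicit --- but the route is the same.
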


However, the crucial question here is the converse. To conclude, we propose the following problem:

\begin{problem}
What are the minimal conditions ensuring that a connected submanifold $M$ of  a homogeneous Riemannian manifold $\overline{M}$, endowed with a homogeneous structure, is an open subset of a homogeneous submanifold.
\end{problem}

Of course, if we ask $S$ to be $\overline{G}$-invariant, then, by~\cite[Thm.~1]{E1998}, the problem is resolved. This question aims to generalize the following theorem.

\begin{theorem}
\cite[Thm.~6.1.12]{BCO2016}
    A connected submanifold $M$ of a space form $\overline{M}$ is an open subset of a homogeneous submanifold of $\overline{M}$ if and only if $M$ admits a homogeneous structure.
\end{theorem}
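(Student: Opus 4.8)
The direction ``$M$ is an open subset of a homogeneous submanifold $\Rightarrow$ $M$ admits a homogeneous structure'' is immediate from the Corollary above: if $M$ is open in a homogeneous submanifold $M'\subset\overline{M}$, then $M'$ carries a homogeneous structure $S'$, and since $T_pM=T_pM'$ for all $p\in M$ and conditions (a), (b) of the definition are pointwise, the restriction $S'|_M$ is a homogeneous structure of $M$. The whole content therefore lies in the converse, and the plan is to reduce it to Theorem~\ref{thm:Esc} by supplying, from the homogeneous structure $S$, the single hypothesis of that theorem which the definition does not grant outright — namely that the associated metric connection $D$ is a $\overline{G}$-connection — exploiting decisively that $\overline{M}$ is a space form and $\overline{G}=\Isom(\overline{M})$.

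So assume $M$ is connected and carries a homogeneous structure $S$, and set $D=\overline{\nabla}^g-S$, a metric connection on $E=T\overline{M}|_M$ for which $TM$ is a parallel subbundle and $S$ is $D$-parallel. First I would record that, $\overline{M}$ being a space form, it is a symmetric space, so the canonical connection $\tilde{\nabla}$ of its symmetric reductive decomposition equals the Levi--Civita connection $\overline{\nabla}^g$; hence $\Gamma=\tilde{\nabla}-D=S$ is $D$-parallel, consistently with Lemma~\ref{lem:lemurcillo}. The key step is the observation that on a space form \emph{every} metric connection on $E$ is automatically a $\overline{G}$-connection: for any piecewise smooth curve $c\colon[a,b]\to M$, the $D$-parallel transport $\tau_c\colon T_{c(a)}\overline{M}\to T_{c(b)}\overline{M}$ is a linear isometry (because $D$ is metric), and for a space form $\overline{G}=\Isom(\overline{M})$ acts transitively on the orthonormal frame bundle of $\overline{M}$, so there is $g\in\overline{G}$ with $\tau_c=(L_g)_*$. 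This is precisely the feature that fails for a general homogeneous ambient, whose isotropy is strictly smaller than the full orthogonal group, which is why for arbitrary $\overline{M}$ the converse must be left as the Problem stated above. Granting it, $D$ is a $\overline{G}$-connection on $E$ such that $TM$ is a $D$-parallel subbundle and $\Gamma=\tilde{\nabla}-D$ is $D$-parallel, so the hypotheses of Theorem~\ref{thm:Esc} hold, and when $M$ is closed it follows that $M$ is reductive extrinsically homogeneous, i.e.\ a homogeneous submanifold of $\overline{M}$.

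It remains to deal with the ``open subset'' refinement, which is the only delicate point, since Theorem~\ref{thm:Esc} is stated for closed submanifolds whereas here $M$ is only connected. For this I would not invoke Theorem~\ref{thm:Esc} as a black box but instead run, around a fixed $o\in M$, the reconstruction of the acting group underlying it (a Kostant--Nomizu/Ambrose--Singer-type argument: the $D$-parallel data $S$, $TM$, $\overline{R}|_M$ and $\overline{g}$ — note $D\overline{R}=0$ holds automatically on a space form, where $\overline{R}$ is algebraic in $\overline{g}$ — together with the holonomy algebra of $D$ assemble into a Lie algebra $\mathfrak{g}$ of Killing fields of $\overline{M}$, each tangent to $M$ along $M$), producing a connected subgroup $G\subset\overline{G}$ whose orbit $G\cdot o$ contains a neighbourhood of $o$ in $M$. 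Since the reconstructed infinitesimal data is $D$-parallel, hence intrinsically the same at every point of $M$, and $M$ is connected, a standard monodromy/globalization argument then identifies $M$ with an open subset of the single homogeneous submanifold $G\cdot o$. I expect this globalization step, rather than any of the underlying algebraic identities, to be the main obstacle to a fully rigorous write-up.
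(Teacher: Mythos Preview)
The paper does not supply its own proof of this theorem; it is quoted verbatim from \cite{BCO2016} as a known result whose generalization is precisely the open Problem posed immediately before it. There is therefore nothing in the paper to compare your attempt against.

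For what it is worth, your outline identifies the correct mechanism that singles out space forms: the isotropy of $\overline{G}=\Isom(\overline{M})$ at any point is the full orthogonal group, so every linear isometry between tangent spaces is realized by an ambient isometry, and hence any metric connection on $E$ is automatically a $\overline{G}$-connection --- which is exactly the hypothesis of \ref{thm:Esc} that the bare definition of a homogeneous structure does not guarantee in general. Your handling of the ``open subset'' issue by reconstructing a Lie algebra of Killing fields from the $D$-parallel data is also in the spirit of the argument in \cite{BCO2016}; in the space-form case the globalization is in fact less delicate than you fear, since Killing fields on a simply connected space form are determined by their $1$-jet at a point and extend globally, so the local group you build at $o$ already acts on all of $\overline{M}$ and its orbit through $o$ contains $M$ by connectedness.
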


Notice that, in this result, the authors do not assume the $\overline{G}$-invariance of $S$. Indeed, they show that, for submanifolds of space forms, no further conditions beyond the existence of a homogeneous structure are required.
\vspace{2mm}

We finally analyze the tensor $\Gamma$ when the reductive decomposition of the orbit $M=G\cdot o$ is the induced one by a reductive decomposition $\overline{\g}=\overline{\h}+\overline{\m}$ for $\overline{M}$ as in \ref{thm:main}. Indeed from \eqref{carcon} and \eqref{carcon2}, given $u\in T_oM$, we have that
\[
\Gamma _u (B^*)_o= (\tilde{\nabla}_uB^*-D_uB^*)_o=[X^u_\m - X^u_{\overline{\m}},B]^*_o,
\]
where $X^u_{\overline{\m}}\in \overline{\m}$, $X^u_\m \in \m$ satisfy $(X^u_{\overline{\m}})^*_o = (X^u_{\m})^*_o=u$. Then we have $X^u_\m - X^u_{\overline{\m}}\in \overline{\h}$, in fact
\[
X^u_\m - X^u_{\overline{\m}} \in \h ^{\perp _{\overline{\h}}}.
\]
In other words, the tensor $\Gamma = \tilde{\nabla}-D:TM\to \mathrm{End}(E)$ is defined by the adjoint representation of the normal subspace $\h ^{\perp _{\overline{\h}}}\subset \overline{\h}$. Obviously, the element $X^u_\m - X^u_{\overline{\m}} \in \h ^{\perp _{\overline{\h}}}$ depends on the choice of $\overline{\m}$ and $\g\subset\overline{\g}$, so does the way the action of $\h ^{\perp _{\overline{\h}}}$ is defined.

\section{Examples}\label{sec-examples}

\subsection{Horospheres in $\R \mathrm{H}(n)$} 

The real hyperbolic space $\RH(n)$ is a symmetric space with isometry group $\overline{G} = \mathrm{SO}(n,1)$. This action carries the Cartan decomposition (in the Lie algebra level)
\[
\overline{\g} = \overline{\h} + \overline{\m}, \qquad \overline{\g} = \mathfrak{so}(n,1), \quad \overline{\h} = \mathfrak{so}(n).
\]
The Cartan decomposition satisfies
\[
[\overline{\h}, \overline{\h}] \subset \overline{\h},\quad [\overline{\h}, \m] \subset \m, \quad [\m, \m] = \overline{\h}.
\]
An explicit description of the elements of this decomposition is,
\begin{equation*}
\overline{\g}=
\begin{Bmatrix}
\begin{pmatrix}
B & v_1 & v_2 \\
-v_2 ^t & a & 0 \\
-v_1 ^t& 0 & -a \\
\end{pmatrix} :\:
\begin{matrix}
B \in \mathfrak{so}(n-1);\\
v_1, v_2 \in \R ^{n-1}; \\ a \in \R
\end{matrix}
\end{Bmatrix},
\quad 
\overline{\h}=
\begin{Bmatrix}
\begin{pmatrix}
B & v & v \\
-v^t & 0 & 0 \\
-v^t & 0 & 0 \\
\end{pmatrix} :\:
\begin{matrix}
B \in \mathfrak{so}(n-1); \\
v \in \R^{n-1}
\end{matrix}
\end{Bmatrix}
\end{equation*}
and
\begin{equation*}
	\overline{\m}=
	\begin{Bmatrix}
		\begin{pmatrix}
			0 & v & -v \\
			v^t & a & 0 \\
			-v^t & 0 & -a \\
		\end{pmatrix} :\:
		\begin{matrix}
			v \in \R^{n-1};\; a\in \R
		\end{matrix}
	\end{Bmatrix}.
\end{equation*}
We make use of the description of $\R \mathrm{H}(n)$ as the warped product $(\R\times_f \R ^{n-1}, g = dt^2 + f(t) ^2 g_{\R^{n-1}})$, where $f(t) = e^{-t}$. 

 We now consider $G = \mathrm{SO}(n-1) \ltimes \R ^{n-1}$, which acts exclusively on the second factor of the warped product. The action of $G$ induces a foliation of $\R \mathrm{H}(n)$ which is known as \emph{Horosphere foliation}, the leaves of which are $L_t = \{ t \} \times \R ^{n-1}$, for all $t \in \R$.
If we consider de decomposition
\[
\g = \h + \m
\]
where
\begin{equation*}
\h= \mathfrak{so}(n-1)=
\begin{Bmatrix}
\begin{pmatrix}
B & 0 & 0 \\
0 & 0 & 0 \\
0 & 0 & 0 \\
\end{pmatrix} :\:
\begin{matrix}
B \in \mathfrak{so}(n-1)
\end{matrix}
\end{Bmatrix},
\end{equation*}
and
\begin{equation*}
	\m=
	\begin{Bmatrix}
		\begin{pmatrix}
			0 & 0 & -v \\
			v^t & 0 & 0 \\
			0 & 0 & 0 \\
		\end{pmatrix} :\:
		\begin{matrix}
			v \in \R^{n-1}
		\end{matrix}
	\end{Bmatrix} \subset \overline{\h},
\end{equation*}
one can check that $\m = \h ^{\perp _{\overline{\phi}}} \cap \g$ as in \ref{thm:main}. If we now follow~\ref{rmk:3.2}, then we obtain an $\mathrm{Ad}(H)$-decomposition,
\[
\overline{\g} = \overline{\h} + \m + \n, \quad \text{where }\n = \mathbb{R}\,\mathrm{diag} (0,1,-1).
\]

Furthermore, since the flow of any fundamental vector field $X^*_p = X$ with $X \in \R ^{n-1}$ preserves any orthonormal frame of $\R \mathrm{H}(n)$, from \ref{prop:1} it follows that the connection $D$ (given in~\ref{thm:Esc}) on $E = T\R \mathrm{H}(n)|_{\R ^{n-1}}$ coincides with $\nabla^\R + \nabla ^{\R^{n-1}}$ restricted to $E$ where $\nabla^\R$ and $\nabla ^{\R^{n-1}}$ are the Levi-Civita connections of $\R$ and $\R^{n-1}$ (with the Euclidean metric), respectively.

\subsection{Concentric spheres in $\mathbb{R}^{2m}$}
In this example, we broaden the realm of Riemannian actions to include conformal geometry. More precisely, we consider the punctured Eucliden space $(\R ^n-\{0\}, g_{\R^n})$, which is isometric to the warped product $(\R^+ \times_f \mathbb{S}^{n-1},dr^2 + f(r)^2 g_{\mathbb{S}^{n-1}})$, where $r$ is the radial coordinate (i.e., the distance from the origin), $f(r)=r$, and $g_{\mathbb{S}^{n-1}}$ is the round metric on the sphere. We denote this manifold by $(M,g)$. Although $(M,g)$ is not a homogeneous Riemannian manifold, it is a conformally homogeneous Riemannian manifold, meaning that there exists a Lie group $\overline{G}$ of conformal transformations that acts transitively on $(M, [g])$ ($[g]$ is the conformal class of $g$). Let $(\R^+, \, \cdot\,)$ be the Lie group of positive numbers with the action of the multiplication group. More specifically, the Lie group $\overline{G}$ is
\[
\mathrm{Conf}(M,[g]) = (\R ^+, \,\cdot \,) \times \mathrm{SO}(n),
\]
the action of $\overline{G}$ on $M$ is given by
\begin{align*}
    \left(\R^+ \times \mathrm{SO}(n)\right) \times  \left(\R^+ \times_f \mathbb{S}^{n-1}\right) &\to  \R^+ \times_f \mathbb{S}^{n-1} \\
    ((r, A),(s,p)) &\longmapsto (r \cdot s, A\cdot p)
\end{align*}
and the isotropy group $\overline{H}$ is $\mathrm{SO}(n-1)$. Note that $r \frac{\partial}{\partial r}$ is not a Killing vector field, but a conformal Killing vector field, that is, $\mathcal{L}_{r\frac{\partial}{\partial r}} g = 2g $. The Lie algebra $\overline{\g}$ of $\overline{G}$ admits a reductive decomposition 
\[
\overline{\g} = \overline{\h} + \overline{\m}
\]
where $\overline{\h} = \so (n-1)$ is the Lie algebra of $\overline{H}$ and $\m$ is an $\mathrm{Ad}(\overline{H})$-invariant subspace. An explicit description of these objects is
\begin{equation*}
\overline{\g}=
\begin{Bmatrix}
\begin{pmatrix}
0 & v & 0 \\
-v^t& B & 0 \\
 0 &0 &a
\end{pmatrix} :\:
\begin{matrix}
B \in \mathfrak{so}(n-1);\\
v \in \R ^{n-1};\; a \in \R
\end{matrix}
\end{Bmatrix}, 
\quad 
\overline{\h}=
\begin{Bmatrix}
\begin{pmatrix}
0 & 0 & 0 \\
0 &  B &0 \\
0 & 0  & 0 \\
\end{pmatrix} :\:
\begin{matrix}
B \in \mathfrak{so}(n-1)
\end{matrix}
\end{Bmatrix}
\end{equation*}
and
\begin{equation*}
	\overline{\m}=
	\begin{Bmatrix}
		\begin{pmatrix}
			0 & v & 0 \\
			-v^t & 0 & 0 \\
			0 & 0 & a \\
		\end{pmatrix} :\:
		\begin{matrix}
			v \in \R^{n-1};\; a\in \R
		\end{matrix}
	\end{Bmatrix}.
\end{equation*}

Let $G$ be any Lie group of isometries acting transitively on $\mathbb{S}^{n-1}$ and let $H$ be its isotropy group; see~\cite{AHL2023} for the explicit expressions of these Lie groups and its Lie algebras. Since $G \subset \overline{G}$ is closed, we can apply~\ref{thm:Esc} provided that $M = G/H$ admits a reductive decomposition. 
To verify this, we adapt here~\ref{thm:main}. Note that although $(M,g)$ is not a homogeneous Riemannian manifold, \ref{thm:main} uses only the fact that $\overline{\phi}$ is positive definite on $\overline{\h}$ and this property still holds in this example. We set $\m = \h ^{\perp _{\overline{\phi}}} \cap \g$ where $\g$ and $\h$ are the Lie algebras of $G$ and $H$, respectively. Consequently, analogously to~\ref{thm:main}, we obtain
\begin{equation}\label{eq:red}
    \g = \h + \m
\end{equation}
which is a reductive decomposition of $M=G/H$, $H=\overline{H}\cap G$, where $\h ^\perp =\{X\in \g:\: \overline{\phi}(X,Y)=0,\,\forall Y\in \h\}$. Furthermore, by~\ref{rmk:3.2}, we have an $\mathrm{Ad}(H)$-invariant decomposition,
\[
 \overline{\g} = \overline{\h} + \m + \n, \quad \text{where }\n = \R \, \mathrm{diag}(0,1).
\]
Let $\tilde{\nabla}$ be the canonical connection corresponding to the reductive decomposition~\eqref{eq:red}. Since $\frac{\partial}{\partial r}$ is invariant under $G$, it follows that the connection $D$ described in~\ref{prop:1} is $\nabla ^{\R ^+} + \tilde{\nabla}$ restricted to $E =  T \R ^{2m} |_{\mathbb{S}^{2m-1}}$, where $\nabla ^{\R ^+}$ is the Levi-Civita connection of $\R ^+$ with the Euclidean metric. 
As final remark, this construction of $\m$ depends on the explicit expression of $\g$ as a Lie subalgebra of $\Bar{\g}$. To obtain it, we again refer the reader to~\cite{AHL2023}.

\printbibliography

\end{document}